\documentclass[11pt,reqno]{article}
\usepackage{latexsym, amsmath, amssymb, amsthm, a4, epsfig}
\usepackage{graphicx}

\newtheorem{theorem}{Theorem}[section]
\newtheorem{cor}[theorem]{Corollary}
\newtheorem{lemma}[theorem]{Lemma}
\newtheorem{prop}[theorem]{Proposition}


\setlength{\textwidth}{150mm} \setlength{\textheight}{230mm}
\setlength{\oddsidemargin}{6mm} \setlength{\evensidemargin}{6mm} \setlength{\topmargin}{-12mm}

\newcommand{\nm}{\noalign{\smallskip}}
\newcommand{\ds}{\displaystyle}
\newcommand{\p}{\partial}

\newcommand{\eqnref}[1]{(\ref {#1})}

\newcommand{\Rbb}{\mathbb{R}}

\newcommand{\la}{\langle}
\newcommand{\ra}{\rangle}

\newcommand{\Hcal}{\mathcal{H}}

\newcommand{\Kcal}{\mathcal{K}}

\newcommand{\Scal}{\mathcal{S}}



\newcommand{\Ga}{\alpha}

\newcommand{\Gd}{\delta}

\newcommand{\Gvf}{\varphi}

\newcommand{\Gl}{\lambda}

\newcommand{\Gv}{\nu}
\newcommand{\Gp}{\pi}

\newcommand{\Gs}{\sigma}

\newcommand{\GG}{\Gamma}

\newcommand{\GO}{\Omega}


\newcommand{\beq}{\begin{equation}}
\newcommand{\eeq}{\end{equation}}



\numberwithin{equation}{section}
\numberwithin{figure}{section}

\begin{document}

\title{A concavity condition for existence of a negative Neumann-Poincar\'e eigenvalue in three dimensions\thanks{\footnotesize This work was supported by National Research Foundation of Korea through grants No. 2016R1A2B4011304 and 2017R1A4A1014735.}}

\author{Yong-Gwan Ji\thanks{Department of Mathematics and Institute of Applied Mathematics, Inha University, Incheon 22212, S. Korea (22151063@inha.edu, hbkang@inha.ac.kr).} \and Hyeonbae Kang\footnotemark[2] }

\maketitle

\begin{abstract}
It is proved that if a bounded domain in three dimensions satisfies a certain concavity condition, then the Neumann-Poincar\'e operator on the boundary of the domain or its inversion in a sphere has at least one negative eigenvalue. The concavity condition is quite simple, and is satisfied if there is a point on the boundary at which the Gaussian curvature is negative.
\end{abstract}

\noindent{\footnotesize {\bf AMS subject classifications}. 47A45 (primary), 31B25 (secondary)}

\noindent{\footnotesize {\bf Key words}. Neumann-Poincar\'e operator, negative eigenvalue, concavity condition, negative Gaussian curvature, inversion.}

\section{Introduction}

The Neumann-Poincar\'e (abbreviated by NP) operator is a boundary integral operator which appears naturally when solving classical boundary value problems using layer potentials. Its study goes back to C. Neumann \cite{Neumann-87} and Poincar\'e \cite{Poincare-AM-87} as the name of the operator suggests. Lately interest in the spectral properties of NP operator is growing rapidly, which is due to their relation to plasmonics \cite{ACKLM, AMRZ, MFZ}, and significant progress is being made among which are work on continuous spectrum in two dimensions \cite{BZ, HKL, KLY, PP2} and Weyl asymptotic of eigenvalues in three dimensions \cite{Miya} to name only a few.

However, there are still several puzzling questions on the NP spectrum (spectrum of the NP operator). A question on existence of negative NP eigenvalues in three dimensions is one of them. Unlike two-dimensional NP spectrum which is symmetric with respect to $0$ and hence there are the same number of negative eigenvalues as positive ones (see, for example \cite{HKL, KPS}), not so many surfaces (boundaries of three-dimensional domains) are known to have negative NP eigenvalues. In fact, NP eigenvalues on spheres are all positive, and it is only in \cite{Ahner} which was published in 1994 that the NP operator on a very thin oblate spheroid is shown to have a negative eigenvalue. We emphasize that the NP eigenvalues on ellipsoids can be found explicitly using Lam\'e functions for which we also refer to \cite{AA, FK, Mart, Ritt}. As far as we are aware of, there is no surface other than ellipsoids known to have a negative NP eigenvalue.

It is the purpose of this paper to present a simple geometric condition which guarantees existence of a negative NP eigenvalue. To present the condition, let $\GO$ be a bounded domain with the $C^{1,\Ga}$ boundary for some $\Ga>0$. We say $\p\GO$ is concave with respect to $p \in \GO$ if there is a point $x \in \p\GO$ such that
\beq\label{concave}
(x-p) \cdot \nu_x <0,
\eeq
where $\nu_x$ denotes the outward unit normal vector to $\p\GO$. We emphasize that if $\p\GO$ is $C^2$, then this condition is fulfilled if there is a point on $\p\GO$ where the Gaussian curvature is negative. In fact, if $(x-p) \cdot \nu_x \ge 0$ for all $p \in \GO$ and $x \in \p\GO$, then $\GO$ is convex and hence the Gaussian curvatures on $\p\GO$ are non-negative. We prove in this paper that if the concavity condition \eqnref{concave} holds for some $p \in \GO$, then the NP operator defined either on $\p\GO$ or the surface of inversion with respect $p$ has at least one negative eigenvalue (see Theorem \ref{mainthm1} and Corollary \ref{mainthm2}). We emphasize that \eqnref{concave} is not a necessary condition for existence of a negative NP eigenvalue; oblate spheroids have negative NP eigenvalues as mentioned before. 

This paper is organized as follows. We review in section \ref{sec:main} the definition of the NP operator and state main results of this paper. Section \ref{sec:proof} is to prove the transformation formula for the NP operator under the inversion in a sphere. We use this formula to prove the main results.

\section{The NP operator and statements of main results}\label{sec:main}

Let $\GG(x)$ be the fundamental solution to the
Laplacian, {\it i.e.},
\beq\label{gammacond}
\GG (x) =
\begin{cases}
\ds \frac{1}{2\pi} \ln |x|, \quad & d=2, \\ \nm \ds
-\frac{1}{4\pi |x|}, \quad & d = 3.
\end{cases}
\eeq
As before, let $\GO$ be a bounded domain with the $C^{1,\Ga}$ boundary for some $\Ga>0$. The single layer potential $\Scal_{\p \GO} [\Gvf]$ of a density function $\Gvf$ is defined by
\beq
\Scal_{\p \GO} [\Gvf] (x):= \int_{\p \GO} \GG (x-y) \Gvf (y) \, d\Gs(y), \quad x \in \Rbb^d.
\eeq
It is well known (see, for example, \cite{AmKa07Book2, Fo95}) that $\Scal_{\p \GO} [\Gvf]$ satisfies the jump relation
\beq\label{singlejump}
\frac{\p}{\p\nu} \Scal_{\p \GO} [\Gvf] \Big|_\pm (x) = \biggl( \pm \frac{1}{2} I + \Kcal_{\p \GO}^* \biggr) [\Gvf] (x),
\quad x \in \p \GO,
\eeq
where $\frac{\p}{\p\nu}$ denotes the outward normal derivative, the subscripts $\pm$ indicate the limit from outside and inside of $\GO$, respectively, and the operator $\Kcal_{\p \GO}^*$ is defined by
\beq\label{introkd}
\Kcal_{\p \GO}^* [\Gvf] (x):=
\int_{\p \GO} \nu_x \cdot \nabla_x \GG(x-y) \Gvf(y)\,d\Gs(y), \quad x \in \p  \GO.
\eeq

The operator $\Kcal_{\p \GO}^*$ is called the NP operator associated with the domain $\GO$ (or its boundary $\p\GO$). The operator $\Scal_{\p\GO}$, as an operator on $\p\GO$, maps $H^{-1/2}(\p\GO)$ into $H^{1/2}(\p\GO)$ continuously, and is invertible if $d=3$. If $d=2$, then there are domains where $\Scal_{\p\GO}$ has one-dimensional kernel, but by dilating the domain, it can be made to be invertible (see \cite{Verch-JFA-84}). So, the bilinear form $\la \cdot,\cdot \ra_{\p\GO}$, defined by
\beq\label{innerp}
\la \Gvf,\psi \ra_{\p\GO}:= -\la \Gvf,  \Scal_{\p \GO}[\psi] \ra,
\eeq
for $\Gvf, \psi \in H^{-1/2}(\p \GO)$, is actually an inner product on $H^{-1/2}(\p \GO)$ and it yields the norm equivalent to usual $H^{-1/2}$-norm (see, for example, \cite{KKLSY}). Here, $H^s$ denotes the usual $L^2$-Sobolev space of order $s$ and $\la \cdot, \cdot \ra$ is the $H^{-1/2}$-$H^{1/2}$ duality pairing. We denote the space $H^{-1/2}(\p \GO)$ equipped with the inner product $\la \cdot,\cdot \ra_{\p\GO}$ by $\Hcal^*$.

It is proved in \cite{KPS} that the NP operator $\Kcal_{\p \GO}^*$ is self-adjoint with respect to the inner product $\la \cdot,\cdot \ra_{\p\GO}$. In fact, it is an immediate consequence of
Plemelj's symmetrization principle
\beq\label{Plemelj}
\Scal_{\p \GO} \Kcal^*_{\p \GO} = \Kcal_{\p \GO} \Scal_{\p \GO}.
\eeq
Here $\Kcal_{\p \GO}$ is the adjoint of $\Kcal^*_{\p \GO}$ with respect to the usual $L^2$-inner product. Since $\Kcal_{\p \GO}^*$ is compact on $\Hcal^*$ if $\p\GO$ is $C^{1,\Ga}$, it has eigenvalues converging to $0$.

For a fixed $r>0$ and $p \in \Rbb^d$, let $T_p : \Rbb^d \setminus \{p\} \rightarrow \Rbb^d \setminus \{p\}$, $d =2,3$, be the inversion in a sphere, namely,
\beq
T_p x := \frac{r^2}{|x-p|^2} (x-p) + p.
\eeq
For a given bounded domain $\GO$ in $\Rbb^d$, let $\p \GO_p^*$ be the inversion of $\p \GO$, {\it i.e.}, $\p \GO_p^*= T_p(\p \GO)$. If we invert $\GO$ in spheres of two different radii, then the resulting domains are dilations of each other. Since NP spectrum is invariant under dilation as one can see easily by a change of variables, we may fix the radius of the inversion sphere once for all.

The following is the main results of this paper.
\begin{theorem}\label{mainthm1}
Let $\GO$ be a bounded domain whose boundary is $C^{1,\Ga}$ smooth for some $\Ga>0$. If the concavity condition \eqnref{concave} holds for some $p \in \GO$, then either $\Kcal_{\p\GO}^*$ or $\Kcal_{\p\GO_p^*}^*$ has a negative eigenvalue.
\end{theorem}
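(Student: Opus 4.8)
The plan is to reduce the existence of a negative eigenvalue to the negativity of a single quadratic form, and then to use the inversion $T_p$ to produce a test density — on $\p\GO$ or on $\p\GO_p^*$ — for which that form is negative. Since $\Kcal_{\p\GO}^*$ is compact and self-adjoint on $\Hcal^*$, its nonzero spectrum consists of eigenvalues accumulating only at $0$; hence if $\la\Kcal_{\p\GO}^*[\Gvf],\Gvf\ra_{\p\GO}<0$ for some $\Gvf\in\Hcal^*$, then $\Kcal_{\p\GO}^*$ has a negative eigenvalue (and likewise on $\p\GO_p^*$). So it is enough to make one of the two NP quadratic forms negative. To that end I would first rewrite the NP quadratic form as a difference of Dirichlet energies: if $u=\Scal_{\p\GO}[\Gvf]$, then $u$ is harmonic in $\GO$ and in $\GO_e:=\Rbb^3\setminus\ol\GO$, continuous across $\p\GO$, and $u(x)=O(|x|^{-1})$ at infinity, so combining the jump relation \eqnref{singlejump}, the symmetrization \eqnref{Plemelj}, and Green's identity in $\GO$ and $\GO_e$ gives
\[
\la\Kcal_{\p\GO}^*[\Gvf],\Gvf\ra_{\p\GO}=\tfrac12\Big(\int_{\GO_e}|\nabla u|^2\,dx-\int_{\GO}|\nabla u|^2\,dx\Big),
\]
and analogously on $\p\GO_p^*$, where the relevant bounded domain is $\GO_p^*:=T_p(\GO_e)$ with exterior $(\GO_p^*)_e=T_p(\GO\setminus\{p\})$. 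Thus I need a single layer potential with larger interior than exterior energy, for $\GO$ or for $\GO_p^*$.

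Next I would invoke the transformation of the single layer potential under $T_p$ (the content of Section~\ref{sec:proof}). The Kelvin-type map $u\mapsto w$, $w(z):=\frac{r}{|z-p|}u(T_p z)$, is a bijection between single layer potentials on $\p\GO$ and on $\p\GO_p^*$: the singularity of $w$ at $p$ is removable because $u$ decays at infinity, and $w$ again decays like $|z|^{-1}$. Writing $z=T_p x$, using that $DT_p(x)$ is $\frac{r^2}{|x-p|^2}$ times the reflection across the hyperplane orthogonal to $x-p$, that the surface measure scales by $(r^2/|x-p|^2)^2$, and that $T_p$ interchanges the bounded and unbounded sides (so the outward normal picks up a sign), a direct computation of $|\nabla u|^2$ together with the three-dimensional identity $\operatorname{div}_z\!\big(\tfrac{z-p}{|z-p|^2}\big)=|z-p|^{-2}$ and the divergence theorem yields
\[
\int_{\GO_e}|\nabla u|^2\,dx=\int_{\GO_p^*}|\nabla w|^2\,dz+Q,\qquad
\int_{\GO}|\nabla u|^2\,dx=\int_{(\GO_p^*)_e}|\nabla w|^2\,dz-Q,
\]
where $Q:=\int_{\p\GO}\frac{(x-p)\cdot\nu_x}{|x-p|^2}\,u(x)^2\,d\Gs(x)$. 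Substituting these into the energy formula above and letting $\psi$ be the density of $w$ gives the crucial identity
\[
\la\Kcal_{\p\GO}^*[\Gvf],\Gvf\ra_{\p\GO}+\la\Kcal_{\p\GO_p^*}^*[\psi],\psi\ra_{\p\GO_p^*}=\int_{\p\GO}\frac{(x-p)\cdot\nu_x}{|x-p|^2}\,u(x)^2\,d\Gs(x).
\]

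Finally I would choose $\Gvf$. By the concavity condition \eqnref{concave} there is $x_0\in\p\GO$ with $(x_0-p)\cdot\nu_{x_0}<0$, and since $\p\GO$ is $C^{1,\Ga}$ the inequality $(x-p)\cdot\nu_x<0$ persists on a relative neighborhood $V\subset\p\GO$ of $x_0$. Pick a nonzero $f\in H^{1/2}(\p\GO)$ supported in $V$, and, using that $\Scal_{\p\GO}:H^{-1/2}(\p\GO)\to H^{1/2}(\p\GO)$ is invertible for $d=3$, set $\Gvf:=\Scal_{\p\GO}^{-1}[f]\ne0$, so that $u|_{\p\GO}=f$. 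Then the right-hand side of the crucial identity equals $\int_V\frac{(x-p)\cdot\nu_x}{|x-p|^2}f(x)^2\,d\Gs(x)<0$, so at least one of $\la\Kcal_{\p\GO}^*[\Gvf],\Gvf\ra_{\p\GO}$ and $\la\Kcal_{\p\GO_p^*}^*[\psi],\psi\ra_{\p\GO_p^*}$ is negative, which produces a negative eigenvalue of $\Kcal_{\p\GO}^*$ or of $\Kcal_{\p\GO_p^*}^*$.

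The main obstacle is the transformation formula — more precisely, bringing the correction term into exactly the shape $\int_{\p\GO}\frac{(x-p)\cdot\nu_x}{|x-p|^2}u^2\,d\Gs$, since this is precisely where the concavity condition enters. This requires careful bookkeeping of all the conformal factors (the weight $r/|x-p|$ of the Kelvin transform, the $(r^2/|x-p|^2)^2$ scaling of $d\Gs$, the reflection in $DT_p$, and the sign flip of the normal coming from the interchange of the two sides) together with a limiting argument justifying the divergence theorem at the interior point $p$ and at infinity. Once the formula is in this form, the remaining steps — the spectral reduction, the energy rewriting of the NP form, and the choice of a bump concentrated where $\p\GO$ is concave with respect to $p$ — are routine.
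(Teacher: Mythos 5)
Your proof is correct, and its endgame coincides with the paper's: both hinge on the identity
\[
\la \Kcal^{*}_{\p\GO}[\Gvf],\Gvf\ra_{\p\GO}+\la \Kcal^{*}_{\p\GO_p^*}[\Gvf^*],\Gvf^*\ra_{\p\GO_p^*}=\int_{\p\GO}\frac{(x-p)\cdot\nu_x}{|x-p|^2}\,\big|\Scal_{\p\GO}[\Gvf](x)\big|^2\,d\Gs(x),
\]
which is exactly Proposition \ref{tildeNPinnerprod}(i), followed by the same choice of test density (invert $\Scal_{\p\GO}$ on a bump supported where $(x-p)\cdot\nu_x<0$) and the same numerical-range/self-adjointness argument. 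Where you genuinely diverge is in how you reach that identity. The paper first proves a pointwise transformation formula for the NP operator itself under inversion (Lemma \ref{tildNP}), obtained by a direct change of variables in the boundary integral using the conformality of $T_p$ and the factorization $\GG(x^*-y^*)=\frac{|x||y|}{r^2}\GG(x-y)$, and then pairs it with the single-layer transformation \eqnref{singleform}. You instead represent the quadratic form as half the difference of exterior and interior Dirichlet energies of $u=\Scal_{\p\GO}[\Gvf]$ and track how these energies transform under the Kelvin transform $w(z)=\frac{r}{|z-p|}u(T_pz)$; the correction term $Q=\int_{\p\GO}\frac{(x-p)\cdot\nu_x}{|x-p|^2}u^2\,d\Gs$ produced by the divergence theorem is precisely the right-hand side above. (Your two energy identities are consistent with the paper's proposition after summing, and they check out on the unit ball with $\Gvf\equiv 1$, where $w=u$ and $Q=4\pi$.) The trade-off: the paper's route yields the stronger operator-level statement, which in particular gives the invariance of the two-dimensional NP spectrum under inversion as a byproduct, whereas your route explains more transparently why the failure of conformal invariance of the three-dimensional Dirichlet integral is a boundary term weighted by $(x-p)\cdot\nu_x$ --- that is, why the concavity condition is the natural hypothesis --- at the cost of the limiting arguments near $p$ and at infinity that you correctly flag as the delicate step.
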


We have the following corollary as an immediate consequence of Theorem \ref{mainthm1}.

\begin{cor}\label{mainthm2}
Suppose $\p\GO$ is $C^{2}$ smooth. If there is a point on $\p\GO$ where the Gaussian curvature is negative, then either $\Kcal_{\p\GO}^*$ or $\Kcal_{\p\GO_p^*}^*$ for some $p \in \GO$ has a negative eigenvalue.
\end{cor}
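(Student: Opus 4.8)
The plan is to deduce Corollary \ref{mainthm2} from Theorem \ref{mainthm1} by showing that the presence of a single point of negative Gaussian curvature forces the concavity condition \eqnref{concave} to hold for some $p\in\GO$. It is more convenient to argue the contrapositive, namely that if
\beq\label{concaveneg}
(x-p)\cdot\nu_x\ge 0 \qquad \text{for all } p\in\GO \text{ and all } x\in\p\GO,
\eeq
then the Gaussian curvature of $\p\GO$ is non-negative at every point. Granting this, if the Gaussian curvature is negative somewhere on $\p\GO$, then \eqnref{concaveneg} must fail, so there are $p\in\GO$ and $x\in\p\GO$ with $(x-p)\cdot\nu_x<0$; hence $\p\GO$ is concave with respect to that particular $p$, and Theorem \ref{mainthm1} produces a negative eigenvalue of $\Kcal_{\p\GO}^*$ or of $\Kcal_{\p\GO_p^*}^*$.

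First I would reformulate \eqnref{concaveneg} geometrically: it says precisely that, for every $x\in\p\GO$, the domain $\GO$ lies in the closed half-space $H_x^-:=\{y\in\Rbb^3:(y-x)\cdot\nu_x\le 0\}$ bounded by the tangent plane to $\p\GO$ at $x$ and lying on its inward side. Hence $\overline{\GO}\subset K:=\bigcap_{x\in\p\GO}H_x^-$, and $K$ is closed and convex; I claim that in fact $K=\overline{\GO}$, which shows that $\GO$ is convex. For the remaining inclusion $K\subset\overline{\GO}$, take $z\notin\overline{\GO}$ and let $x_0\in\overline{\GO}$ be a nearest point to $z$, so that $|z-x_0|>0$ and $x_0\in\p\GO$. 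Minimality of $|y-z|$ over the compact set $\overline{\GO}$ at the boundary point $x_0$ forces $z-x_0$ to be a positive multiple of the outward unit normal $\nu_{x_0}$; this is where the $C^{1,\Ga}$, hence $C^1$, smoothness of $\p\GO$ enters, guaranteeing that the tangent plane, and with it $\nu_{x_0}$, is well defined. Consequently $(z-x_0)\cdot\nu_{x_0}>0$, so $z\notin H_{x_0}^-\supset K$. This proves $K=\overline{\GO}$, and therefore $\overline{\GO}$, and hence $\GO$, is convex.

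Finally I would invoke the classical fact that a convex domain with $C^2$ boundary lies, near each of its boundary points, on one side of the tangent plane there. Consequently the second fundamental form of $\p\GO$ is semidefinite at every point, so the two principal curvatures have the same sign (or vanish) at every point, and their product --- the Gaussian curvature --- is non-negative everywhere. This establishes the contrapositive stated in the first paragraph and completes the proof.

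I do not expect a genuine obstacle in this argument, because the analytic substance of the corollary is carried entirely by Theorem \ref{mainthm1}, which may be assumed. The only point that needs a little care is the convexity characterization of the second step --- precisely, the claim that under \eqnref{concaveneg} the tangent plane at each boundary point is a supporting hyperplane for $\GO$ --- and this is settled by the nearest-point argument together with the $C^1$ regularity of $\p\GO$.
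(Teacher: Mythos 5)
Your argument is correct and is essentially the paper's own: the authors likewise deduce the corollary from Theorem \ref{mainthm1} via the contrapositive observation (stated in their introduction) that if $(x-p)\cdot\nu_x\ge 0$ for all $p\in\GO$ and $x\in\p\GO$ then $\GO$ is convex and hence has non-negative Gaussian curvature everywhere. You have merely supplied the supporting-hyperplane and nearest-point details that the paper leaves implicit.
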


\section{Inversion in a sphere}\label{sec:proof}

Just for simplicity we now assume the center $p$ of the inversion sphere is $0$, and denote $\p\GO_p^*$ and $T_p$ by $\p\GO^*$ and $T$, respectively.

Let $x^*:= Tx$. Since
$$
\frac{\p x_i}{\p x^*_j} = \frac{r^2 \Gd_{ij}}{|x^*|^2} - 2r^2 \frac{x^*_i x^*_j}{|x^*|^4},
$$
the Jacobian matrix of $T^{-1}$ is given by
\beq\label{Jacobian}
J_{T^{-1}} = \frac{r^2}{|x^*|^2} \Big(I - 2\frac{x^*}{|x^*|}\frac{(x^*)^t}{|x^*|} \Big) = \frac{|x|^2}{r^2} \Big(I - 2\frac{x}{|x|}\frac{x^t}{|x|} \Big).
\eeq
Here $t$ denotes transpose. So, $T$ is conformal. The change of variable formulas for line and surface are respectively given as follows:
\begin{align}
& ds(x^*) = \frac{r^2}{|x|^2} ds(x), \label{line}\\
& dS(x^*) = \frac{r^4}{|x|^4} dS(x). \label{surface}
\end{align}

It is known (see \cite{MacMillan-book}) that
\begin{align}
|x^*-y^*| = \frac{r^2}{|x||y|}  |x-y|.
\end{align}
So we have
\beq\label{tildefund}
\GG(x^*-y^*) = \begin{cases}  \GG(x-y) -\GG(x) -\GG(y) + \GG(r^2)  &\text{if  } d = 2, \\
\nm
\ds \frac{|x||y|}{r^2} \GG(x-y) &\text{if  } d = 3. \end{cases}
\eeq
For a function $\Gvf$ defined on $\p\GO$, define $\Gvf^*$ on $\p\GO^*_p$ by
\beq\label{inversionfunc}
\Gvf^*(y^*) := \Gvf (y) \frac{|y|^d}{r^d}.
\eeq
Then, we can easily see using \eqref{tildefund} that the following relation between the single layer potentials on domains $\p \GO$ and $\p\GO^*$ holds (see also \cite{MacMillan-book}):
\beq\label{singleform}
\Scal_{\p\GO^*} [ \Gvf^* ](x^*)  =
\begin{cases} \Scal_{\p \GO} [\Gvf](x) - \Scal_{\p \GO} [\Gvf](0) + \Big( \ds \int_{\p\GO} \Gvf ds \Big) \big(\GG(r^2) - \GG(x)\big)  &\text{if  } d = 2, \\
\ds \frac{|x|}{r} \Scal_{{\p \GO}} [ \Gvf ](x) &\text{if  } d = 3 .
\end{cases}
\eeq

We note that the map $\Gvf \mapsto \Gvf^*$ is a conformal map from $\Hcal^*(\p \GO)$ to $\Hcal^*(\p \GO^*)$, in three dimensions, that is,
\beq
\la \Gvf^*, \psi^* \ra_{\p\GO^*} = \la \Gvf, \psi \ra_{\p\GO}.
\eeq
This relation is also true in two dimensions if $\Gvf$ and $\psi$ are of mean zero.
The relationship between outward unit normal vectors $\Gv_{x^*}$ on $\p\GO^*$ and $\Gv_x$ on ${\p \GO}$ are given as follows:
\beq\label{normalform}
\Gv_{x^*} = (-1)^m \Big( I - 2\frac{x}{|x|} \frac{x^t}{|x|} \Big) \Gv_x,
\eeq
where $m = 1$ if $0$ is an exterior point of $\GO$ and $m= 0$ if $0$ is an interior point of $\GO$. We emphasize that $0$ is the inversion center.

The NP operators $ \Kcal^{*}_{\p \GO}$ and $\Kcal^{*}_{\p\GO^*}$ are related in the following way:

\begin{lemma} \label{tildNP}
Suppose that $0$ is the center of the inversion sphere.
\begin{itemize}
\item[(i)] If $0 \in \GO$, then
\beq \label{tildNPinterior}
\Kcal^{*}_{\p\GO^*} [ \Gvf^* ](x^*) = \begin{cases} \displaystyle -\frac{|x|^2}{r^2} \Kcal^{*}_{\p \GO} [ \Gvf ](x) + \Big( \int_{\p \GO} \Gvf ds \Big) \frac{ x \cdot \Gv_x }{2\Gp r^2}  \quad &\text{if  } d = 2, \\
\nm
\ds -\frac{|x|^3}{r^3} \Kcal^{*}_{\p \GO} [ \Gvf ](x) - \frac{|x|(x \cdot \Gv_x)}{r^3} \Scal_{{\p \GO}} [ \Gvf ](x)  \quad &\text{if  } d = 3. \end{cases}
\eeq

\item[(ii)] If $0 \in \overline{\GO}^c$, then
\beq \label{tildNPexterior}
\Kcal^{*}_{\p\GO^*} [ \Gvf^* ](x^*) =
\begin{cases} \displaystyle \frac{|x|^2}{r^2} \Kcal^{*}_{\p \GO} [ \Gvf ](x) - \Big( \int_{\p \GO} \Gvf ds \Big) \frac{x \cdot \Gv_x}{2\Gp r^2}  \quad &\text{if  } d = 2, \\
\nm
\ds \frac{|x|^3}{r^3} \Kcal^{*}_{\p \GO} [ \Gvf ](x) + \frac{|x|(x \cdot \Gv_x)}{r^3} \Scal_{{\p \GO}} [ \Gvf ](x)  \quad &\text{if  } d = 3.
\end{cases}
\eeq

\end{itemize}
\end{lemma}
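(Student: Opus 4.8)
The plan is to obtain these identities by differentiating the single layer potential transformation formula \eqref{singleform} in the normal direction and reading off the NP operator from the jump relation \eqref{singlejump}. I describe the three-dimensional case; the two-dimensional one is entirely parallel, the extra terms in \eqref{singleform} being responsible for the extra boundary term in \eqref{tildNPinterior}--\eqref{tildNPexterior}.

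Fix a density $\Gvf$ on $\p\GO$ and set $u:=\Scal_{\p\GO}[\Gvf]$ and $v:=\Scal_{\p\GO^*}[\Gvf^*]$, regarded as functions on $\Rbb^3\setminus\p\GO$ and $\Rbb^3\setminus\p\GO^*$. By the $d=3$ line of \eqref{singleform}, $v(x^*)=\tfrac{|x|}{r}u(x)$ for $x^*\notin\p\GO^*$, where $x=T^{-1}x^*$. First I would compute $\nabla_{x^*}v$: using $|x|=r^2/|x^*|$, hence $\nabla_{x^*}|x|=-\tfrac{|x|}{r^2}x$, together with the conformal Jacobian $J_{T^{-1}}=\tfrac{|x|^2}{r^2}R$ of \eqref{Jacobian}, where $R:=I-2\tfrac{x}{|x|}\tfrac{x^t}{|x|}$, the chain rule gives
\beq
\nabla_{x^*}v(x^*)=-\frac{|x|}{r^3}\,x\,u(x)+\frac{|x|^3}{r^3}\,R\,\nabla_x u(x).
\eeq
Now $R$ is a symmetric orthogonal involution with $Rx=-x$, so contracting with the normal $\Gv_{x^*}=(-1)^m R\Gv_x$ of \eqref{normalform} and using $(R\Gv_x)\cdot(R\nabla_x u)=\Gv_x\cdot\nabla_x u$ and $(R\Gv_x)\cdot x=\Gv_x\cdot(Rx)=-(x\cdot\Gv_x)$ gives
\beq
\frac{\p v}{\p\Gv_{x^*}}(x^*)=(-1)^m\Big(\frac{|x|^3}{r^3}\,\frac{\p u}{\p\Gv_x}(x)+\frac{|x|\,(x\cdot\Gv_x)}{r^3}\,u(x)\Big),
\eeq
valid for every $x\notin\p\GO$. (In two dimensions one works the same way with the $d=2$ line of \eqref{singleform}: the constant and $\GG(r^2)$ are killed by $\nabla_{x^*}$, while $\nabla_{x^*}\GG(x)=J_{T^{-1}}\nabla\GG(x)=-\tfrac{x}{2\pi r^2}$ produces, after contracting with $\Gv_{x^*}$, the term proportional to $(x\cdot\Gv_x)\int_{\p\GO}\Gvf\,ds$ appearing in \eqref{tildNPinterior}--\eqref{tildNPexterior}.)

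It remains to pass to the boundary. Since $\Scal_{\p\GO}[\Gvf]$ is continuous across $\p\GO$, the factor $u(x)$ (and, in two dimensions, the single-layer and integral terms) survives the limit unchanged; for the normal derivatives I would let $x^*\to\p\GO^*$ from one side while $x=T^{-1}x^*\to\p\GO$ correspondingly, and apply \eqref{singlejump} on $\p\GO^*$ and on $\p\GO$. The essential point is to pair the two sides of $\p\GO^*$ with the two sides of $\p\GO$ correctly: the inversion $T$ carries the interior of $\GO$ to one component of $\Rbb^d\setminus\p\GO^*$ and the exterior to the other, and which is which is governed by whether $0\in\GO$ or $0\in\overline{\GO}^c$ --- the very dichotomy recorded through $m$ in \eqref{normalform}. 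With the correct pairing, \eqref{singlejump} turns the displayed pointwise identity into a relation of the shape $(\pm\tfrac12 I+\Kcal^*_{\p\GO^*})[\Gvf^*]=(-1)^m\big[\tfrac{|x|^d}{r^d}(\pm\tfrac12 I+\Kcal^*_{\p\GO})[\Gvf]+(\text{single-layer/integral term})\big]$, and since $\Gvf^*(x^*)=\tfrac{|x|^d}{r^d}\Gvf(x)$ by \eqref{inversionfunc} the two $\pm\tfrac12$ contributions cancel, leaving \eqref{tildNPinterior} when $0\in\GO$ and \eqref{tildNPexterior} when $0\in\overline{\GO}^c$.

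The main obstacle I expect is precisely this last piece of bookkeeping: choosing the pairing of one-sided limits so that the sign $(-1)^m$ of \eqref{normalform}, the $\pm\tfrac12$ of the jump relations, and the weight $|x|^d/r^d$ of \eqref{inversionfunc} combine to cancel the spurious $\pm\tfrac12$ terms and reproduce the correct overall sign of $\Kcal^*_{\p\GO^*}$. The differential-geometric part --- computing $\nabla_{x^*}v$ and contracting with $\Gv_{x^*}$ --- is routine once \eqref{Jacobian} and \eqref{normalform} are granted. One could instead bypass the jump relation by substituting \eqref{surface}, \eqref{tildefund} and \eqref{inversionfunc} directly into the integral definition \eqref{introkd} of $\Kcal^*_{\p\GO^*}[\Gvf^*]$ and simplifying $\Gv_{x^*}\cdot\nabla_{x^*}\GG(x^*-y^*)$ with the same identities for $R$; this recovers the kernel of $\Kcal^*_{\p\GO}$ and the $\Scal_{\p\GO}$ correction directly, at the cost of heavier integrand manipulations.
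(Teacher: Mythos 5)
Your proof is correct, but it takes a genuinely different route from the paper's. The paper proves the lemma by brute force on the kernel: it substitutes \eqnref{normalform}, \eqnref{Jacobian}, \eqnref{tildefund}, \eqnref{inversionfunc} and the measure formulas directly into the integral definition \eqnref{introkd} of $\Kcal^*_{\p\GO^*}[\Gvf^*]$ and simplifies $\Gv_{x^*}\cdot\nabla_{x^*}\GG(x^*-y^*)$ using the fact that $J_{T^{-1}}$ is a scalar multiple of the symmetric involution $R=I-2xx^t/|x|^2$ --- i.e.\ exactly the ``alternative'' you relegate to your last sentence. Your primary route --- differentiate the off-boundary identity $\Scal_{\p\GO^*}[\Gvf^*](x^*)=\frac{|x|}{r}\Scal_{\p\GO}[\Gvf](x)$, contract with $\Gv_{x^*}$, and pass to the boundary via the jump relation \eqnref{singlejump} --- is not the paper's, and it buys something: it explains the structure of the formula (conformal transformation law for normal derivatives of harmonic functions plus the Kelvin-type weight), and it avoids justifying differentiation under a principal-value integral. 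The price is the side-pairing bookkeeping you identify. Your gradient computation and the identities $R^t=R$, $R^2=I$, $Rx=-x$ are all used correctly.

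One warning on the step you flag as the main obstacle. The cancellation of the $\pm\frac12$ terms is \emph{not} automatic from \eqnref{normalform} as printed: for $0\in\GO$ a direct check on the ball $B(0,a)$ gives $\Gv_{x^*}=-R\Gv_x$ (the image of the interior of $\GO$ is the unbounded component of $\Rbb^3\setminus\p\GO^*$, so the outward normal of the bounded domain $\GO^*$ flips), whereas a literal reading of \eqnref{normalform} with $m=0$ gives $+R\Gv_x$; the paper's own computation silently inserts the extra minus sign. If you carry $(-1)^m$ with $m=0$ through your argument for $0\in\GO$, the half-terms do not cancel and you pick up a spurious $-\Gvf^*$; with the geometrically correct sign $\Gv_{x^*}=-R\Gv_x$ they cancel from \emph{either} side of the boundary (the pairing of sides then only affects which of the two consistent jump equations you write down, not the outcome), and you land exactly on \eqnref{tildNPinterior}; the exterior case is identical with the opposite sign. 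So your argument is sound, but you should fix the sign convention explicitly rather than leave it to ``the correct pairing.''
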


\begin{proof}
Since the difference of proofs for \eqnref{tildNPinterior} and \eqnref{tildNPexterior} is just the sign of the normal vector, we only prove the first one.

If $d=2$, we use \eqnref{Jacobian}, \eqnref{line}, \eqnref{tildefund}, \eqnref{inversionfunc}, and \eqnref{normalform} to have
\begin{align*}
\Kcal^{*}_{\p\GO^*} [ \Gvf^* ](x^*) &= \int_{\p\GO^*} \Gv_{x^*} \cdot \nabla_{x^*} \GG(x^* - y^*)  \Gvf^*(y^*) \; ds(y^*) \\
&= \int_{{\p \GO}} - \Big( I - 2\frac{x}{|x|} \frac{x^t}{|x|} \Big) \Gv_x \cdot J_{T^{-1}}^t \nabla_{x} \Big( \GG(x-y) -\GG(x) \Big) \Gvf (y) \; ds(y) \\
&= \int_{{\p \GO}} - \Gv_x \cdot \Big( I - 2\frac{x}{|x|} \frac{x^t}{|x|} \Big)^t J_{T^{-1}}^t \nabla_{x} \Big( \GG(x-y) -\GG(x) \Big)  \Gvf (y) \;ds(y) \\
&= - \frac{|x|^2}{r^2} \int_{{\p \GO}} \Gv_x \cdot \Big( \nabla_x \GG(x-y) - \nabla_x \GG(x)   \Big)  \Gvf (y) \; ds(y) \\
&= -\frac{|x|^2}{r^2} \Kcal^{*}_{\p \GO} [ \Gvf ](x) + \Big( \int_{\p \GO} \Gvf ds \Big) \frac{x \cdot \Gv_x}{2\Gp r^2}.
\end{align*}

The three-dimensional case can be proved similarly. In fact, we have
\begin{align*}
\Kcal^{*}_{\p\GO^*} [ \Gvf^* ](x^*) &= \int_{\p\GO^*} \Gv_{x^*} \cdot \nabla_{x^*} \GG(x^* - y^*) \; \Gvf^*(y^*) \;  dS(y^*) \\
&= \int_{{\p \GO}} - \Big( I - 2\frac{x}{|x|} \frac{x^t}{|x|} \Big) \Gv_x \cdot J_{T^{-1}}^t \nabla_{x} \Big( \frac{|x||y|}{r^2} \GG(x-y) \Big) \Gvf (y)  \; \frac{r}{|y|}dS(y) \\
&= \int_{{\p \GO}} - \Gv_x \cdot \Big( I - 2\frac{x}{|x|} \frac{x^t}{|x|} \Big)^t J_{T^{-1}}^t \nabla_{x} \Big( \frac{|x||y|}{r^2} \GG(x-y) \Big) \Gvf (y)  \; \frac{r}{|y|}dS(y) \\
&= - \frac{|x|^2}{r} \int_{{\p \GO}} \Gv_x \cdot \Big( \frac{|x|}{r^2} \nabla_x \GG(x-y) + \frac{x}{r^2|x|}\GG(x-y)   \Big) \Gvf (y) \; dS(y) \\
&= \displaystyle -\frac{|x|^3}{r^3} \Kcal^{*}_{\p \GO} [ \Gvf ](x) - \frac{|x|(x \cdot \Gv_x)}{r^3} \Scal_{{\p \GO}} [ \Gvf ](x).
\end{align*}
This completes the proof.
\end{proof}

If $\Gvf$ is an eigenvector of $\Kcal^{*}_{\p \GO}$ with corresponding eigenvalue $\Gl \neq 1/2$, then $\int_{\p \GO} \Gvf ds=0$. So \eqnref{inversionfunc} and Lemma \ref{tildNP} for $d=2$ show that $\Gvf^*$ is an eigenvector of $\Kcal^{*}_{\p\GO^*}$ and the corresponding eigenvalue is $-\Gl$ if $0\in \GO$ and $\Gl$ if $0 \in \overline{\GO}^c$. Since the spectrum $\Gs(\Kcal^{*}_{\p \GO})$ of the NP operator in two dimensions is symmetric with respect to $0$, we infer that $\Gs(\Kcal^{*}_{\p\GO^*}) = \Gs(\Kcal^{*}_{\p \GO})$, namely, the spectrum is invariant under inversion. This fact is known \cite{Schi}, but the above yields an alternative proof.

In three dimensions, we obtain the following identities.

\begin{prop}\label{tildeNPinnerprod}
Suppose $d=3$ and $0$ is the center of the inversion sphere.
\begin{itemize}
\item[(i)] If $0 \in \GO$, then
\beq\label{inverform1}
\la \Kcal^{*}_{\p\GO^*} [ \Gvf^* ], \Gvf^* \ra_{\p\GO^*} + \la \Kcal^{*}_{\p \GO} [ \Gvf ], \Gvf \ra_{\p\GO} = \int_{\p \GO}  \frac{x \cdot \Gv_x}{|x|^2} | \Scal_{{\p \GO}} [ \Gvf ](x)|^2 \; dS.
\eeq

\item[(ii)] If $0 \in \overline{\GO}^c$, then
\beq\label{inverform2}
\la \Kcal^{*}_{\p\GO^*} [ \Gvf^* ], \Gvf^* \ra_{\p\GO^*} - \la \Kcal^{*}_{\p \GO} [ \Gvf ], \Gvf \ra_{\p\GO} = -\int_{\p \GO}  \frac{x \cdot \Gv_x}{|x|^2} | \Scal_{{\p \GO}} [ \Gvf ](x)|^2 \; dS.
\eeq

\end{itemize}
\end{prop}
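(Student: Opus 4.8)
The plan is to derive both identities directly from Lemma~\ref{tildNP}, together with the conformal invariance of the $\Hcal^*$-inner product stated above and the definition \eqnref{innerp} of $\la\cdot,\cdot\ra_{\p\GO}$. Since parts (i) and (ii) differ only by the sign coming from \eqnref{normalform}, I will only sketch (i), where $0\in\GO$.

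The key observation is that, by the definition \eqnref{inversionfunc} of the star operation, the three-dimensional case of Lemma~\ref{tildNP}(i) can be rewritten as
\[
\Kcal^{*}_{\p\GO^*}[\Gvf^*] = g^*, \qquad g := -\Kcal^{*}_{\p\GO}[\Gvf] - \frac{x\cdot\Gv_x}{|x|^2}\,\Scal_{\p\GO}[\Gvf],
\]
since the star operation is linear and $g^*(x^*) = g(x)\,\tfrac{|x|^3}{r^3}$ reproduces precisely the right-hand side of \eqnref{tildNPinterior}. Applying the conformal invariance $\la\Gvf^*,\psi^*\ra_{\p\GO^*} = \la\Gvf,\psi\ra_{\p\GO}$ to the pair $(g,\Gvf)$ — so that $\la g^*,\Gvf^*\ra_{\p\GO^*}=\la g,\Gvf\ra_{\p\GO}$ — and using $\Kcal^{*}_{\p\GO^*}[\Gvf^*]=g^*$, one gets
\[
\la\Kcal^{*}_{\p\GO^*}[\Gvf^*],\Gvf^*\ra_{\p\GO^*} = -\la\Kcal^{*}_{\p\GO}[\Gvf],\Gvf\ra_{\p\GO} - \Big\la \frac{x\cdot\Gv_x}{|x|^2}\Scal_{\p\GO}[\Gvf],\,\Gvf\Big\ra_{\p\GO}.
\]
Finally \eqnref{innerp} turns the last term into $\int_{\p\GO}\tfrac{x\cdot\Gv_x}{|x|^2}\,\Scal_{\p\GO}[\Gvf]\,\Scal_{\p\GO}[\Gvf]\,dS = \int_{\p\GO}\tfrac{x\cdot\Gv_x}{|x|^2}|\Scal_{\p\GO}[\Gvf]|^2\,dS$, and moving the $\la\Kcal^{*}_{\p\GO}[\Gvf],\Gvf\ra_{\p\GO}$ term to the left side yields \eqnref{inverform1}. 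For (ii) one repeats the argument with Lemma~\ref{tildNP}(ii): there $\Kcal^{*}_{\p\GO^*}[\Gvf^*]=h^*$ with $h=\Kcal^{*}_{\p\GO}[\Gvf]+\tfrac{x\cdot\Gv_x}{|x|^2}\Scal_{\p\GO}[\Gvf]$, so both signs flip, which turns the sum into a difference and flips the sign on the right-hand side, giving \eqnref{inverform2}.

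As a consistency check, the same identities follow by the more computational route of writing $\la\Kcal^{*}_{\p\GO^*}[\Gvf^*],\Gvf^*\ra_{\p\GO^*} = -\int_{\p\GO^*}\Kcal^{*}_{\p\GO^*}[\Gvf^*]\,\Scal_{\p\GO^*}[\Gvf^*]\,dS(x^*)$, substituting $x^*=Tx$, and inserting the $d=3$ line of \eqnref{singleform} (that is $\Scal_{\p\GO^*}[\Gvf^*](x^*)=\tfrac{|x|}{r}\Scal_{\p\GO}[\Gvf](x)$), Lemma~\ref{tildNP}, and \eqnref{surface}; the prefactors $\tfrac{|x|^3}{r^3}\cdot\tfrac{|x|}{r}\cdot\tfrac{r^4}{|x|^4}=1$ cancel and the integrand reduces to $-g(x)\,\Scal_{\p\GO}[\Gvf](x)$, after which one splits the integral as before. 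There is no genuine obstacle here beyond careful bookkeeping of signs and of the powers of $|x|/r$; the only point worth noting is that, since $0\in\GO$ (resp. $0\in\overline{\GO}^{\,c}$), the factor $|x|$ is bounded away from $0$ on $\p\GO$, so the weight $x\cdot\Gv_x/|x|^2$ is a bounded function and all the pointwise identities invoked are harmless.
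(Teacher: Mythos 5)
Your proof is correct and follows essentially the same route as the paper: the paper's own proof is exactly your ``consistency check'' computation (change of variables via \eqnref{singleform}, Lemma \ref{tildNP} and \eqnref{surface}, with the powers of $|x|/r$ cancelling). Your primary phrasing via $\Kcal^{*}_{\p\GO^*}[\Gvf^*]=g^*$ together with the isometry $\la g^*,\Gvf^*\ra_{\p\GO^*}=\la g,\Gvf\ra_{\p\GO}$ is just a clean repackaging of that same change of variables, and all signs check out.
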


\begin{proof}
The identity follows from \eqnref{singleform} and \eqnref{tildNPinterior} immediately. In fact, we have
\begin{align*}
\la \Kcal^{*}_{\p\GO^*} [ \Gvf^* ], \Gvf^* \ra_{\p\GO^*}
&= -\int_{\p\GO^*}  \Kcal^{*}_{\p\GO^*} [ \Gvf^* ](x^*) \; \overline{\Scal_{\p\GO^*} [ \Gvf^* ](x^*)}  dS(x^*) \\
&= \int_{\p \GO}  \Big( \frac{|x|^3}{r^3} \Kcal^{*}_{\p \GO} [ \Gvf ](x) + \frac{|x|(x \cdot \Gv_x)}{r^3} \Scal_{{\p \GO}} [ \Gvf ](x) \Big) \overline{\Scal_{{\p \GO}} [ \Gvf ](x)}  \frac{r^3}{|x|^3} dS(x) \\
&= -\la \Kcal^{*}_{\p \GO} [ \Gvf ], \Gvf \ra_* + \int_{\p \GO}  \frac{x \cdot \Gv_x}{|x|^2} \; | \Scal_{{\p \GO}} [ \Gvf ](x) |^2 dS(x),
\end{align*}
which proves \eqnref{inverform1}. \eqnref{inverform2} can be proved similarly.
\end{proof}

\medskip
We are now ready to prove Theorem \ref{mainthm1}.

\noindent{\sl Proof of Theorem \ref{mainthm1}}.
Suppose $\GO$ satisfies \eqnref{concave} at $p \in \GO$. Without loss of generality we assume that $p=0$. Then there is $x_0 \in \p\GO$ such that $x_0 \cdot \nu_{x_0} <0$. Choose an open neighborhood $U$ of $x_0$ in $\p \GO$ so that
$x \cdot \nu_x <0$ for all $x \in U$. Since $\Scal_{\p\GO}: H^{-1/2}(\p\GO) \to H^{1/2}(\p\GO)$ is invertible in three dimensions (see \cite{Verch-JFA-84}), we may choose $\Gvf \neq 0$ so that $\Scal_{\p\GO}[\Gvf]$ is supported in $U$. We then infer from \eqnref{inverform1} that
$$
\la \Kcal^{*}_{\p\GO^*} [ \Gvf^* ], \Gvf^* \ra_{\p\GO^*} + \la \Kcal^{*}_{\p \GO} [ \Gvf ], \Gvf \ra_{\p\GO} <0.
$$
Therefore, the numerical range of either $\Kcal^{*}_{\p\GO^*}$ or $\Kcal^{*}_{\p \GO}$ has a negative element. It implies that either $\Kcal^{*}_{\p\GO^*}$ or $\Kcal^{*}_{\p \GO}$ has at least one negative eigenvalue. This completes the proof. \qed

\section*{Acknowledgement}

We thank Yoshihisa Miyanishi for many inspiring discussions.


\end{document}